\documentclass[twoside]{amsart}
\pagestyle{headings}

\usepackage{amsfonts,amsthm}
\usepackage{amsmath, amssymb, amsxtra}
\theoremstyle{plain}

\newtheorem*{theorem A}{Theorem~A}
\newtheorem*{theorem B}{Theorem~B}
\newtheorem{corollary}{Corollary}[section]
\newtheorem{lemma}{Lemma}[section]
\newtheorem{proposition}{Proposition}[section]
\newtheorem{remark}{Remark}[section]
\newtheorem*{mtheorem}{Main Theorem}

\newtheorem*{pro A}{Proposition~A}
\newtheorem*{corollary 01}{Corollary~1}
\newtheorem*{corollary 02}{Corollary~2}
\theoremstyle{plain}
\newcommand{\be}{\begin{equation}}
\newcommand{\ee}{\end{equation}}
\newcommand{\bea}{\begin{eqnarray}}
\newcommand{\eea}{\end{eqnarray}}
\newcommand{\ba}{\begin{array}}

\newcommand{\ea}{\end{array}}

\newcommand{\bc}{\begin{center}}
\newcommand{\ec}{\end{center}}

\newcommand{\benu}{\begin{enumerate}}
\newcommand{\eenu}{\end{enumerate}}
\newcommand{\bpr}{\begin{proposition}}
\newcommand{\epr}{\end{proposition}}
\newcommand{\ble}{\begin{lemma}}
\newcommand{\ele}{\end{lemma}}
\newcommand{\bco}{\begin{corollary}}
\newcommand{\eco}{\end{corollary}}

\def \NBo{SU_{2,m-1}/S(U_2{\cdot}U_{m-1})}
\def \NBt{SU_{2,m}/S(U_{2}{\cdot}U_{m})}
\def \CC{{\mathbb C}}

\def \RF{\it Reeb flow}
\begin{document}

\title[ Hypersurfaces in complex hyperbolic Grassmannians  ]{ Addendum to the paper ``Hypersurfaces with Isometric Reeb Flow in Complex hyperbolic Two-Plane Grassmannians"} \vspace{0.2in}
\author[H. Lee, M.J. Kim \& Y.J. Suh]{Hyunjin Lee, Mi Jung Kim and Young Jin  Suh}

\address{\newline
Young Jin Suh and Mi Jung Kim
\newline Department of Mathematics,
\newline Kyungpook National University,
\newline Taegu 702-701, KOREA}
\email{yjsuh@knu.ac.kr}

\address{\newline Hyunjin Lee
\newline The Center for Geometry and its Applications,
\newline Pohang University of Science \& Technology,
\newline Pohang 790-784, KOREA}
\email{lhjibis@hanmail.net}

\footnotetext[1]{{\it 2000 Mathematics Subject Classification}.
Primary 53C40; Secondary 53C15.} \footnotetext[2]{{\it Key words and
phrases} : real hypersurfaces, noncompact complex two-plane
Grassmannians, Reeb invariant, shape operator, horosphere at
infinity.}


\thanks{* The first author was supported by grant Proj. No. NRF-2012-R1A1A-3002031
and the third by grants Proj. Nos. NRF-2011-220-C00002 and NRF-2012-R1A2A2A-01043023 from National Research Foundation
of Korea.}

\begin{abstract}
We classify all of real hypersurfaces $M$ with Reeb invariant shape
operator in complex hyperbolic two-plane Grassmannians
$SU_{2,m}/S(U_2{\cdot}U_m)$, $m \ge 2$. Then it becomes a tube over
a totally geodesic $SU_{2,m-1}/S(U_2{\cdot}U_{m-1})$ in
$SU_{2,m}/S(U_2{\cdot}U_m)$ or a horosphere whose center at infinity
is singular and of type $JN{\in}{\mathfrak J}N$ for a unit normal vector field $N$ of $M$.
\end{abstract}

\maketitle

\section*{Introduction}
\setcounter{equation}{0}
\renewcommand{\theequation}{0.\arabic{equation}}
\vspace{0.13in}

 Let us introduce a paper due to Suh \cite{S3} for the classification of all real
hypersurfaces with isometric {\RF} in complex hyperbolic two-plane
Grassmann manifold $\NBt$ as follows:

\begin{theorem A}\label{Theorem A}
Let $M$ be a connected
orientable real hypersurface in complex hyperbolic two-plane Grassmannian $\NBt$, $m \geq 3$.
Then the Reeb flow on $M$ is isometric if and only if $M$ is an open
part of a tube around some totally geodesic $\NBo$ in $\NBt$ or a
horosphere whose center at infinity is singular and of type $JN{\in}{\mathfrak J}N$ for a unit normal vector field $N$ of $M$.
\end{theorem A}

A tube around  $\NBo$ in $\NBt$ is a principal orbit of the
isometric action of the maximal compact subgroup $SU_{1,m+1}$ of
$SU_{m+2}$, and the orbits of the {\RF} corresponding to the orbits
of the action of $U_1$. The action of $SU_{1,m+1}$ has two kinds of
singular orbits. One is a totally geodesic $\NBo$ in $\NBt$ and the
other is a totally geodesic $\CC H^m$ in $\NBt$.

\vskip 6pt

When the shape operator $A$ of $M$ in $\NBt$ is
Lie-parallel along the {\RF} of the Reeb vector field $\xi$, that is
${\mathcal L}_{\xi}A=0$, we say that the shape opearator is {\it Reeb invariant}. The purpose of this addendum is, by Theorem A, to give a
complete classification of real hypersurfaces in $\NBt$ with {\it Reeb invariant} shape operator as follows:

\begin{mtheorem}\label{Main Theorem}
Let $M$ be a connected orientable real hypersurface in complex hyperbolic two-plane Grassmannian $\NBt$, $m \geq 3$. Then the shape operator on $M$ is Reeb invariant
if and only if $M$ is an open part of a tube around some totally
geodesic $\NBo$ in $\NBt$ or a horosphere whose center at infinity
is singular and of type $JN{\in}{\mathfrak J}N$ for a unit normal vector field $N$ of $M$.
\end{mtheorem}

Moreover, related to the invariancy of shape operator, by using the result of Main Theorem, we have the following two corollaries.

\begin{corollary 01}\label{coro 1}
There does not exist any connected orientable real hypersurface in complex hyperbolic two-plane Grassmannian $\NBt$, $m \geq 3$, with $\mathcal F$-invariant shape operator.
\end{corollary 01}

\begin{corollary 02}\label{coro 2}
There does not exist any connected orientable real hypersurface in complex hyperbolic two-plane Grassmannian $\NBt$, $m \geq 3$, with invariant shape operator.
\end{corollary 02}

In previous corollaries, if the shape operator~$A$ of $M$ in $\NBt$ satisfies a property of $\mathcal L_{X}A =0$ on a distribution~$\mathcal F$ defined by $\mathcal F = \mathcal C^{\bot} \cup \mathcal Q^{\bot}$ (or for any tangent vector field $X$ on $M$, resp.), then it is said to be {\it $\mathcal F$-invariant} (or {\it invariant}, resp.).

\vskip 6pt

We use some references  \cite{BS}, \cite{BLS}, \cite{H1}, \cite{H2}, and \cite{MP} to recall the Riemannian geometry of complex hyperbolic two-plane Grassmannians $\NBt$. And some fundamental formulas related to the Codazzi and Gauss equations from the curvature tensor of complex hyperbolic two-plane Grassmannian $\NBt$ will be recalled (see \cite{PS}, \cite{MR}, and \cite{O}). In this addendum we give an important Proposition 1.1 and prove our Main Theorem in section~\ref{section 1}. Lastly, we give a brief proof for Corollaries~$1$ and $2$ by using our Main Theorem.

\vskip 17pt

\section  {Proof of the Main Theorem}\label{section 1}
\setcounter{equation}{0}
\renewcommand{\theequation}{1.\arabic{equation}}
\vspace{0.13in}

In order to give a complete proof of our Main Theorem in the
introduction, we need the following Key Proposition. Then by virtue of Theorem A we give a complete proof of our main theorem.
\par
\vskip 6pt

\begin{proposition}\label{Proposition 1.1} Let $M$ be a real hypersurface in noncompact complex two-plane Grassmannian $\NBt$, $m \geq 3$. If the shape operator on $M$ is Reeb invariant, then
the shape operator $A$ commutes with the structure tensor $\phi$.
\end{proposition}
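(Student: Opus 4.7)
The plan is to prove Proposition~1.1 by a short algebraic argument that uses only the K\"ahler identity $\nabla_X\xi = \phi AX$ (which follows from $\xi = -JN$ and the parallelism of $J$ in the ambient structure), the self-adjointness of $A$, and the skew-adjointness of $\phi$. In particular, neither the Codazzi equation nor the curvature operator of $\NBt$ seems to be required at this step.

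First, I would translate Reeb invariance into a pointwise tensor identity. Expanding
\[
(\mathcal{L}_\xi A)X = \nabla_\xi(AX) - \nabla_{AX}\xi - A\nabla_\xi X + A\nabla_X\xi
\]
and substituting $\nabla_X\xi = \phi AX$, the hypothesis $\mathcal{L}_\xi A = 0$ is equivalent to
\[
(\nabla_\xi A)X \;=\; \phi A^2 X - A\phi AX.
\]
Since the Riemannian connection preserves self-adjointness of $(1,1)$-tensors, the operator $(\nabla_\xi A)$ is self-adjoint. Taking the $g$-adjoint of the right-hand side using $\phi^\ast = -\phi$ and $A^\ast = A$, and equating with the operator itself, produces the algebraic identity
\[
\phi A^2 + A^2\phi \;=\; 2\,A\phi A,
\]
which is precisely the assertion that $B := [\phi,A] = \phi A - A\phi$ commutes with $A$. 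A direct check using $\phi^\ast = -\phi$ and $A^\ast = A$ also shows that $B$ is itself self-adjoint.

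Finally, I would invoke simultaneous diagonalization of the commuting self-adjoint pair $A,B$: there is an orthonormal frame $\{e_i\}$ of $T_pM$ with $A e_i = \lambda_i e_i$ and $B e_i = \beta_i e_i$. Each diagonal entry computes as
\[
\beta_i = g(B e_i, e_i) = g(\phi A e_i, e_i) - g(A\phi e_i, e_i) = \lambda_i\, g(\phi e_i, e_i) - \lambda_i\, g(\phi e_i, e_i) = 0,
\]
so $B = 0$ and hence $A\phi = \phi A$. The step I would want to verify most carefully is the self-adjointness/symmetrization that yields $\phi A^2 + A^2\phi = 2A\phi A$, since this is what allows the whole argument to bypass the curvature of $\NBt$. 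The hypothesis $m\ge 3$ plays no role in the proof of Proposition~1.1 itself; it is inherited from the ambient setup and is only consumed downstream, when Theorem~A is invoked to deduce the Main Theorem from $A\phi = \phi A$.
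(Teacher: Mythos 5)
Your proof is correct, but it takes a genuinely different and considerably more elementary route than the paper's. Both arguments begin identically, expanding $\mathcal{L}_{\xi}A=0$ with $\nabla_{X}\xi=\phi AX$ to obtain $(\nabla_{\xi}A)X=\phi A^{2}X-A\phi AX$. From there the paper invokes the Codazzi equation of $\NBt$, contracts it over an orthonormal frame, and computes ${\rm div}\,U$ for $U=\phi A\xi$ in two independent ways; comparing the two expressions forces $\parallel\phi A-A\phi\parallel^{2}=0$. You instead observe that $\nabla_{\xi}A$ is self-adjoint (the Levi-Civita connection preserves symmetry of $(1,1)$-tensors), so the operator $\phi A^{2}-A\phi A$ must equal its own adjoint $-A^{2}\phi+A\phi A$, which is exactly $[[\phi,A],A]=0$; and for $A$ symmetric and $\phi$ skew, the double commutator has entries $(\lambda_{j}-\lambda_{i})^{2}\phi_{ij}$ in an eigenbasis of $A$, so its vanishing forces $[\phi,A]_{ij}=(\lambda_{j}-\lambda_{i})\phi_{ij}=0$ --- your simultaneous-diagonalization phrasing is an equivalent packaging of this fact. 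The step you flagged is sound: $g((\nabla_{\xi}A)X,Y)=g(X,(\nabla_{\xi}A)Y)$ holds for any symmetric tensor field, and the adjoint computation uses only $A^{\ast}=A$ and $\phi^{\ast}=-\phi$. The payoff of your route is that it is pointwise and curvature-free, hence proves the stronger statement that $\mathcal{L}_{\xi}A=0$ implies $A\phi=\phi A$ for a real hypersurface in an arbitrary K\"ahler manifold; as you note, neither $m\geq 3$ nor the quaternionic K\"ahler structure of $\NBt$ enters. The paper's route, by contrast, consumes the explicit curvature tensor of $\NBt$ through the Codazzi equation and rests on a delicate cancellation between its two divergence formulas, buying nothing extra for this particular proposition beyond some intermediate identities that might be reusable elsewhere.
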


\begin{proof} \quad First note that
\begin{equation*}
\begin{split}
(\mathcal{L}_{\xi}A)X & = {\mathcal L}_{\xi}(AX) - A \mathcal{L}_{\xi}X \\
& = \nabla_{\xi}(AX) - \nabla_{AX} \xi - A(\nabla_{\xi}X -
\nabla_{X}\xi)\\
& = (\nabla_{\xi}A)X - \nabla_{AX}\xi + A \nabla_{X}\xi\\
& = (\nabla_{\xi} A)X - \phi A^{2}X + A \phi AX
\end{split}
\end{equation*}
for any vector field $X$ on $M$. Then the assumption
$\mathcal{L}_{\xi}A=0$, that is, the shape operator is Reeb invariant if and only if $(\nabla_{\xi}A)X = \phi
A^{2} X - A \phi AX$.

\vskip 6pt

\noindent On the other hand, by the equation of Codazzi in \cite{S3} and the
assumption of Reeb invariant, we have
\begin{equation}\label{eq: 1(Fundamental equation)}
\begin{split}
(\nabla_{X}A) \xi = \phi & A^{2}X - A \phi AX \\
& + \frac{1}{2} \Big [
 \phi X + \sum_{\nu=1}^{3} \big\{ \eta_{\nu}(\xi) \phi_{\nu}X - \eta_{\nu}(X) \phi_{\nu} \xi + 3 \eta_{\nu}(\phi X)
 \xi_{\nu}\big\}\Big].
\end{split}
\end{equation}
Now, let us take an orthonormal basis $\{e_{1}, e_{2}, \cdots,
e_{4m-1}\}$ for the tangent space $T_{x}M$, $x \in M$, for $M$ in $\NBt$. Then the equation of Codazzi gives
\begin{equation}\label{eq: 2}
\begin{split}
(\nabla_{e_{i}}A)X - (\nabla_{X}A)e_{i}& = -\frac{1}{2}\Big
[\eta(e_{i})\phi X - \eta(X)\phi e_{i} - 2g(\phi e_{i},X)\xi \\
& \quad   + \sum_{\nu=1}^3 \big\{\eta_\nu(e_{i})\phi_\nu
X - \eta_\nu(X)\phi_\nu e_{i} - 2g(\phi_\nu e_{i},X)\xi_\nu\big\} \\
& \quad  + \sum_{\nu=1}^3 \big\{\eta_\nu(\phi
e_{i})\phi_\nu\phi X
-\eta_\nu(\phi X)\phi_\nu\phi e_{i}\big\} \\
& \quad + \sum_{\nu=1}^3 \big\{\eta(e_{i})\eta_\nu(\phi
X) - \eta(X)\eta_\nu(\phi e_{i})\big\}\xi_\nu \Big ],
\end{split}
\end{equation}
from which, together with the fundamental formulas mentioned in \cite{S3}, we know that
\begin{equation}\label{eq: 3}
\begin{split}
&\sum _{i=1}^{4m-1} g ((\nabla_{e_{i}}A)X, \phi e_{i}) \\
& \quad = (2  m-1)\eta(X) + \frac{1}{2} \sum_{\nu=1}^{3} \big\{ g(\phi_{\nu}\xi,
\phi_{\nu}X) - \eta_{\nu}(X) {\rm Tr} (\phi \phi_{\nu}) \big\} \\
& \quad \quad  + \frac{1}{2} \sum_{\nu=1}^{3} \big\{ g(
\phi_{\nu} \phi X, \phi \phi_{\nu}\xi) + \eta(X) g(\phi \xi_{\nu},
\phi \xi_{\nu}) \big\}\\
&=\, (2  m+1)  \eta(X) - \frac{1}{2} \sum_{\nu=1}^{3}
\eta_{\nu}(X){\rm Tr}\phi \phi_{\nu} -\sum_{\nu=1}^{3}
\eta_{\nu}(\xi) \eta_{\nu}(X),
\end{split}
\end{equation}
where the following formulas are used in the second equality
\begin{equation*}
\begin{array}{l}
\displaystyle \sum_{\nu=1}^{3}g(\phi_{\nu}\xi, \phi_{\nu}X)=
3\eta(X) - \sum_{\nu=1}^{3} \eta_{\nu}(\xi)
\eta_{\nu}(X),\\
\displaystyle \sum_{\nu=1}^{3}g(\phi_{\nu} \phi X, \phi \phi_{\nu
}\xi)= \sum_{\nu=1}^{3} \eta(X) \eta_{\nu}^{2}(\xi)-\sum_{\nu=1}^{3}
\eta_{\nu}(\xi) \eta_{\nu}(X),\\
\displaystyle \sum_{\nu=1}^{3} \eta(X) g(\phi\xi_{\nu}, \phi
\xi_{\nu })= 3\eta(X) - \sum_{\nu=1}^{3} \eta_{\nu}^{2}(\xi)\eta(X).
\end{array}
\end{equation*}

\vskip 3pt

\noindent Now let us denote by $U$ the vector $\nabla_{\xi}\xi= \phi
A \xi$. Then using the equation $(\nabla_{X}\phi)Y = \eta(Y)AX -
g(AX, Y) \xi$ given in \cite{S3} and taking the derivative to the vector field $U$ gives
\begin{equation*}
\nabla_{e_{i}}U = \eta(A\xi) Ae_{i} - g(Ae_{i}, A\xi)\xi + \phi
(\nabla_{e_{i}}A)\xi + \phi A (\nabla_{e_{i}}\xi).
\end{equation*}
Then naturally its divergence can be given by
\begin{equation}\label{eq: 4 (divergence of U)}
\begin{split}
{\rm div}\ U & = \sum_{i=1}^{4m-1} g(\nabla_{e_{i}} U, e_{i})\\
&= h \eta(A\xi) - \eta(A^{2}\xi) - \sum_{i=1}^{4m-1} g\big
((\nabla_{e_{i}}A)\xi, \phi e_{i} \big ) - \sum_{i=1}^{4m-1}g\big
(\phi Ae_{i}, A \phi e_{i}\big ),
\end{split}
\end{equation}
where $h$ denotes the trace of the shape operator of $M$ in $\NBt$.

\vskip 3pt

Now we calculate the squared norm of the tensor ${\phi}A-A{\phi}$ as follows:
\begin{equation}\label{eq: 5}
\begin{split}
\parallel  \phi A - A \phi \parallel ^{2} & = \sum_{i} g
\big( (\phi A - A \phi )e_{i},(\phi A - A \phi )e_{i} \big)\\
& =   \sum_{i, j} g \big( (\phi A - A \phi )e_{i},
e_{j}\big) g\big( (\phi A - A \phi )e_{i}, e_{j} \big)\\
& =   \sum_{i, j} \Big \{ g (\phi Ae_{j}, e_{i}) + g(\phi A e_{i},
e_{j})\Big \} \Big \{ g (\phi Ae_{j}, e_{i}) + g(\phi A e_{i},
e_{j})\Big \}\\
& =  2 \sum_{i,j} g(\phi Ae_{j}, e_{i}) g(\phi Ae_{j},
e_{i}) + 2 \sum_{i,j} g(\phi A e_{j}, e_{i})g(\phi Ae_{i}, e_{j})\\
& =   2\sum_{j} g(\phi A e_{j}, \phi A e_{j})  -2 \sum_{j} g(\phi A e_{j} , A \phi e_{j})\\
& = 2 {\rm div}\ U - 2h \eta(A\xi) + 2
\sum_{j}g\big((\nabla_{e_{j}}A)\xi, \phi e_{j}\big) + 2
\rm{Tr}A^{2},
\end{split}
\end{equation}
where $\sum_{i}$ (respectively, $\sum_{i, j}$) denotes the summation
from $i=1$ to $4m-1$ (respectively, from $i,j=1$ to $4m-1$) and in
the final equality we have used (\ref{eq: 4 (divergence of U)}).

\vskip 3pt

\noindent From this, together with the formula (\ref{eq: 3}), it
follows that
\begin{equation}\label{eq: 6}
\begin{split}
{\rm div}\ U =& \frac{1}{2} \parallel \phi A - A\phi \parallel^{2} -
{\rm Tr} A^{2}\\
& + h \eta(A\xi) - 2(m+1) + \frac{1}{2}\sum_{\nu=1}^{3}
\eta_{\nu}(\xi) \rm Tr \phi \phi_{\nu} + \sum_{\nu=1}^{3}
\eta_{\nu}^{2}(\xi).
\end{split}
\end{equation}
From \eqref{eq: 6}, together with the assumption of Reeb invariant shape operator, we want to show that the structure tensor
$\phi$ commutes with the shape operator $A$, that is, $\phi A = A
\phi $.

\vskip 3pt

Let us take the inner product (\ref{eq: 1(Fundamental
equation)}) with the Reeb vector field $\xi$. Then we have
\begin{equation}\label{eq: 7}
\begin{split}
g\big((\nabla_{X}A)\xi, \xi \big) & =-g(A \phi AX, \xi) +
\frac{1}{2}\sum_{\nu=1}^{3} \Big \{ \eta_{\nu}(\xi) g(\phi_{\nu}X,
\xi) + 3 \eta_{\nu}(\phi X)g(\xi_{\nu}, \xi )\Big\}\\
&= -g(A \phi AX, \xi) + 2 \sum_{\nu=1}^{3} \eta_{\nu}(\xi)\eta_{\nu}(\phi X)\\
&= g(AX, U) + 2 \sum_{\nu=1}^{3} \eta_{\nu}(\xi)
\eta_{\nu}(\phi X).
\end{split}
\end{equation}

\vskip 3pt

\noindent On the other hand, by applying the structure tensor $\phi$
to the vector field $U$, we have
\begin{equation*}
\phi U = \phi^{2}A\xi = -A\xi + \eta(A\xi)\xi = -A\xi + \alpha \xi,
\end{equation*}
where the function $\alpha$ denotes $\eta(A\xi)$. From this,
differentiating and using the formula $(\nabla_{X}\phi)Y = \eta(Y)AX
- g(AX, Y) \xi$ gives
\begin{equation}\label{eq: 8}
(\nabla_{X}A)\xi = g(AX, U) \xi - \phi (\nabla_{X}U) - A \phi AX +
(X\alpha) \xi + \alpha \phi AX.
\end{equation}
Taking the inner product (\ref{eq: 8}) with $\xi$ and using
$U= \phi A \xi$ gives
\begin{equation*}
\begin{split}
g\big( (\nabla_{X}A) \xi, \xi \big) & = g(AX, U) -g(A\phi AX, \xi) +
(X \alpha)\\
& = 2g(AX, U) +(X \alpha).
\end{split}
\end{equation*}
Then, together with (\ref{eq: 7}), it follows that
\begin{equation}\label{eq: 9}
g(AX, U) -2 \sum_{\nu=1}^{3}\eta_{\nu}(\xi) \eta_{\nu}(\phi X) +
(X\alpha) = 0.
\end{equation}
Substituting (\ref{eq: 1(Fundamental equation)}) and (\ref{eq:
9}) into (\ref{eq: 8}) gives
\begin{equation}\label{eq: 10}
\begin{split}
2\sum_{\nu=1}^{3} & \eta_{\nu}(\xi)\eta_{\nu}(\phi X) \xi - \phi
(\nabla_{X}U)\\
& =   \frac{1}{2}\phi X +\phi A^{2}X - \alpha \phi AX \\
& \quad +\frac{1}{2}\sum_{\nu=1}^{3}\Big\{
\eta_{\nu}(\xi) \phi_{\nu}X - \eta_{\nu}(X) \phi_{\nu}\xi + 3
\eta_{\nu}(\phi X)
\xi_{\nu}\Big\}.
\end{split}
\end{equation}
Then the above equation can be arranged as follows\,:
\begin{equation*}
\begin{split}
\phi \nabla_{X}U & = - \frac{1}{2} \phi X -\phi A^{2}X + \alpha \phi
AX + 2\sum_{\nu=1}^{3} \eta_{\nu}(\xi)
\eta_{\nu}(\phi X) \xi  \\
& \quad + \frac{1}{2} \sum_{\nu=1}^{3}\Big\{ \eta_{\nu}(X)
\phi_{\nu}\xi - \eta_{\nu} (\xi) \phi_{\nu} X - 3 \eta_{\nu}(\phi X)
\xi_{\nu} \Big\}.
\end{split}
\end{equation*}
From this, summing up from $1$ to $4m-1$ for an orthonormal basis of
$T_{x}M$, $x \in M$, we have
\begin{equation}\label{eq: 11}
\begin{split}
& \quad \sum_{i} g (\phi \nabla_{e_{i}}U, \phi e_{i}) = {\rm div}\, U +
\parallel U \parallel^{2}\\
& = - \frac{1}{2} \sum_{i} g(\phi e_{i}, \phi e_{i}) -\sum_{i}g(\phi
A^{2}e_{i}, \phi e_{i}) + \alpha \sum_{i } g( \phi Ae_{i}, \phi
e_{i}) \\
& \quad + \frac{1}{2} \sum_{\nu=1}^{3} \sum_{i} \Big\{
\eta_{\nu}(e_{i})g(\phi_{\nu}\xi, \phi e_{i}) - \eta_{\nu} (\xi)
g(\phi_{\nu} e_{i}, \phi e_{i}) - 3 \eta_{\nu}(\phi e_{i}) g(\phi e_{i}, \xi_{\nu}) \Big\}\\
& = -2(m+1)- {\rm Tr} A^{2}+ \eta(A^{2}\xi)+ \alpha h - \alpha ^{2} + \sum_{\nu=1}^{3} \eta_{\nu}^{2}(\xi) +
\frac{1}{2}\sum_{\nu=1}^{3}\eta_{\nu}(\xi) {\rm Tr}(\phi \phi_{\nu}),
\end{split}
\end{equation}
where in the first equality we have used the notion of ${\rm div}\ U$. Then it follows that
\begin{equation}\label{eq: 12}
{\rm div}\ U = -2(m+1)- {\rm Tr} A^{2}+ \alpha h + \sum_{\nu=1}^{3}
\eta_{\nu}^{2}(\xi) + \frac{1}{2}\sum_{\nu=1}^{3}\eta_{\nu}(\xi)
{\rm Tr}(\phi \phi_{\nu}),
\end{equation}
where we have used $\parallel U \parallel^{2}= \eta(A^{2}\xi) -
\alpha^{2}$ in (\ref{eq: 11}).

\vskip 6pt
\par
Now if we compare (\ref{eq: 6}) with the formula (\ref{eq: 12}), we
can assert that the squared norm $\parallel \phi A - A \phi
\parallel^{2}$ vanishes, that is, the structure tensor $\phi$ commutes with the shape operator $A$. This completes the
proof of our proposition.
\end{proof}
\vskip 6pt
\par
Hence by Proposition~\ref{Proposition 1.1} we know that the Reeb flow on $M$ is isometric. From this, together with Theorem~A we give a complete
proof of our Main Theorem in the introduction.
\qed
\vskip 6pt
\par
\begin{remark} It can be easily checked that the shape operator of real hypersurfaces $M$ in $\NBt$ is Reeb invariant, that is, ${\mathcal L}_{\xi}A=0$ when $M$ is locally congruent to a tube around some totally
geodesic $\NBo$ in $\NBt$ or a horosphere whose center at infinity
is singular and of type $JN{\in}{\mathfrak J}N$ for a unit normal vector field $N$ of $M$. So the converse of our main theorem naturally holds.
\end{remark}

\vskip 15pt

\section{Proof of Corollaries}\label{section 2}
\setcounter{equation}{0}
\renewcommand{\theequation}{1.\arabic{equation}}
\vspace{0.13in}

From the definitions of three kinds of the invariancy of the shape operator $A$ defined on $M$ in the Introduction, namely {\it invariant}, $\mathcal F$-{\it invariant} and {\it Reeb invariant} shape operator,  the notion of {\it Reeb invariant} is the most weakest condition. Thus from our Main Theorem, we assert that {\it if a real hypersurface~$M$ in $\NBt$, $m \geq 3$, has $\mathcal F$-invariant (or invariant) shape operator, then $M$ is locally congruent to a tube around some totally geodesic $\NBo$ in $\NBt$ or a horosphere whose center at infinity is singular}.

\vskip 3pt

Conversely, if we check whether a tube~$M_{r}$ of radius $r$ around the totally geodesic $\NBo$ in $\NBt$ and a horosphere~$\mathcal H$ in $\NBt$ whose center at infinity is singular have the $\mathcal F$-invariant (or invariant) shape operator, then it does not hold. In fact, we get a contradiction for the case $(\mathcal L_{\xi_{2}}A)\xi_{3}$. From such a view point, we can assert that {\it the shape operator~$A$ of~$M_{r}$ (or $\mathcal H$, respectively) satisfy neither the property of $\mathcal F$-invariant nor invariant shape operator.}

\vskip 3pt

Summing up these discussion, we give a complete proof of our Corollaries in the introduction. \qed

\end{document}